\def\ps@pprintTitle{%
 \let\@oddhead\@empty
 \let\@evenhead\@empty
 \def\@oddfoot{}%
 \let\@evenfoot\@oddfoot}
\newtheorem{thm}{Theorem}
\newtheorem{lem}[thm]{Lemma}
\theoremstyle{example}
\theoremstyle{definition}
\theoremstyle{remark}
\newtheorem{rem}{Remark}
\journal{}
\begin{document}
\begin{frontmatter}

\title{\Large Optimal critical exponent $L^{p}$ inequalities of Hardy type on the sphere via Xiao's method}

\author{Ahmed A. Abdelhakim}
\address{Mathematics Department, Faculty of Science, Assiut University, Assiut 71516, Egypt}
\ead{ahmed.abdelhakim@aun.edu.eg}

\begin{abstract}
First, we correct the proof presented in [Abimbola Abolarinwa, Kamilu Rauf, Songting Yin,
Sharp $L^{p}$ Hardy type and uncertainty
principle inequalities on the sphere, Journal of
Mathematical Inequalities,
13, 4 (2019), 1011 - 1022] and obtain a correct sharp version of an $L^{p}$ Hardy inequality on the sphere
$\mathbb{S}^{n}$ for all $2\leq p<n$. Secondly, we prove sharp critical exponent $L^{n}$ inequalities on the sphere $\mathbb{S}^{n}$ in $\mathbb{R}^{n+1}$, $n\geq 2$. The singularity in this problem is the geodesic distance from an arbitrary point on the sphere.
\end{abstract}

\begin{keyword}
Sobolev Space
\sep $L^{p}$ Hardy inequalities
\sep density agrument
\sep compact manifold
\sep critical exponent
\MSC[2010] 26D10, 35A23, 46E35.
\end{keyword}

\end{frontmatter}
\section{Introduction}
\indent To our best knowledge, the first successful attempt to adapt the ideas introduced in \cite{Kombe}
to obtain inequalities of Hardy type on the $n$-dimensional sphere was that of Xiao's in
\cite{Xiao}. He obtained sharp $L^{2}$
Hardy inequalities on the Euclidean sphere $\mathbb{S}^{n}$, $n\geq 3$. Later, sharp critical case $L^{2}$ results were proved in \cite{ahmed} on the sphere $\mathbb{S}^{2}$ in $\mathbb{R}^{3}$.
Another extension was presented in \cite{xsun}
where subcritical optimal $L^{p}$ inequalities
were proved. The authors in \cite{ahmed,xsun,Xiao} considered the geodesic distance from the pole. In this case, the geodesic distance is precisely the angular variable. Very recently, the author in \cite{Songting}
improved the $L^{2}$ results in \cite{Xiao} by taking the singularity to be the geodesic distance from an arbitrary point on the sphere. This was followed by an attempt to obtain the corresponding sharp $L^{p}$ Hardy inequalities with the general geodesic distance in
\cite{Abimbola1}.
The author has proven in \cite{ahmed2} various sharp $L^{p}$ inequalities of Hardy type on the sphere in both the subcritical and critical exponent cases. The
way we prove sharpness of our inequalities in \cite{ahmed2} takes into account all the constants
involved. \\
\indent Let $u \in C^{\infty}(\mathbb{S}^{n})$, $n\geq 3$. Assume that $d$ denotes the geodesic distance on the sphere from an arbitrary point. It is claimed in (\cite{Abimbola1}, Theorem 1) that the following inequality holds for all $1<p<n$:
\begin{equation}\label{clmlp}
 \left(\frac{n-p}{p}\right)^{p} \int_{\mathbb{S}^{n}}
\frac{|u|^{p}d\sigma_{n}}{|\tan{d}|^{p}}
\leq   \int_{\mathbb{S}^{n}}
|\nabla f|^{p}d\sigma_{n}+
 \left(\frac{n-p}{p}\right)^{p-1}
\int_{\mathbb{S}^{n}}
\frac{|u|^{p}d\sigma_{n}}{\sin^{p-2}{d}}.
\end{equation}
The proof suggested in \cite{Abimbola1} is inaccurate. We point out a missing factor in that proof. Once corrected, the proof no longer implies the inequality (\ref{clmlp}). Interestingly, we obtain
an optimal $L^{p}$ inequality already proved in
\cite{ahmed2} using the divergence theorem, properties of the gradient and Laplacian of the geodesic distance, and H\"{o}lder and Young inequalities. Nevertheless, we believe that the inequality (\ref{clmlp}) probably
holds true.\\
\indent We also show how to adapt Xiao's method to obtain a sharp critical exponent $L^{n}$ Hardy type inequality on ${\mathbb{S}}^{n}$, $n\geq 2$, with the general geodesic distance from an arbitrary point on the sphere.
\section{Preliminaries}
Let $n\geq 2$ and let $\Theta_{n}:=(\theta_j)_{j=1}^{n}\in
[0,\pi]^{n-1}\times[0,2\pi]$.
We can assign to each point on the unit sphere
$\mathbb{S}^{n}$ in $\mathbb{R}^{n+1}$ the spherical coordinates parametrization $\left(x_{m}(\Theta_{n})\right)_{m=1}^{n+1}$, where
\begin{equation*}
x_{m}(\Theta_{n}):=\left\{
                \begin{array}{ll}
                  \cos{\theta_1}, & \hbox{$m=1$;} \vspace{0.1 cm} \\
                  \prod_{j=1}^{m-1}\sin{\theta_j}\cos{\theta_m}, & \hbox{$2\leq m\leq n$;} \vspace{0.15 cm}\\
                  \prod_{j=1}^{n}\sin{\theta_j}, & \hbox{$m=n+1$.}
                \end{array}
              \right.
\end{equation*}
With this representation, the surface gradient $\nabla_{\mathbb{S}^{n}}$ on the sphere $\mathbb{S}^{n}$  is defined by
\begin{equation*}
\nabla_{\mathbb{S}^{n}}  = \frac{\partial }{\partial \theta_1} \widehat{{\theta}_{1}} + \frac{1}{\sin \theta_1} \frac{\partial }{\partial \theta_2} \widehat{{\theta}_{2}}  + \cdots + \frac{1}{\sin \theta_1 \cdots \sin \theta_{n-1}} \frac{\partial }{\partial \theta_{n}} \widehat{{\theta}_{n}},
\end{equation*}
where $\left\{\widehat{{\theta}_{j}}\right\}$
is an orthonormal set of tangential vectors with the vector $\widehat{{\theta}_{j}}$ pointing in the direction of increase of ${\theta}_{j}$. In addition, the Laplace-Beltrami operator $\Delta_{\mathbb{S}^{n}}$ takes the form
\begin{align*}
\nonumber \hspace{-1 cm}\Delta_{\mathbb{S}^{n-1}}  =&\,
 \frac{1}{\sin^{n-2}{\theta_{1}}}
 \frac{\partial }{\partial \theta_1}
 \left(\sin^{n-2}{\theta_{1}} \frac{\partial }{\partial \theta_1} \right)+
\frac{1}{\sin^{2}{\theta_{1}}\sin^{n-3}{\theta_{2}}}
 \frac{\partial }{\partial \theta_2}
 \left(\sin^{n-3}{\theta_{2}} \frac{\partial }{\partial \theta_2} \right)+\\&\nonumber
+\dots+\frac{1}{\sin^{2}{\theta_{1}}\sin^{2}{\theta_{2}}...
 \sin^{2}{\theta_{n-2}}\sin{\theta_{n-1}}}
 \frac{\partial }{\partial \theta_{n-1}}
 \left(\sin{\theta_{n-1}} \frac{\partial }{\partial \theta_{n-1}} \right)+\\&+
  \frac{1}{\sin^{2}{\theta_{1}}\sin^{2}{\theta_{2}}...
 \sin^{2}{\theta_{n-1}}}
 \frac{\partial^2 }{\partial \theta^{2}_{n}}.
\end{align*}
Upon, identifying every point $(x_m(\Theta_{n-1}))_{m=1}^{n+1}\in \mathbb{S}^{n}$ with its parameters $\Theta_{n}$, the geodesic distance $d(\Theta_{n},\Phi_{n})$ from a point $\Phi_{n}
\in \mathbb{S}^{n}$ is defined by
\begin{equation}\label{gdsc}
d(\Theta_{n},\Phi_{n})=\arccos{\left(\sum_{m=1}^{n+1}
x_m(\Theta_{n})x_m(\Phi_{n})\right)}.
\end{equation}
The following properties of the geodesic distance on the sphere are proved in \cite{ahmed2}:
\begin{lem}\label{gradlap}
Suppose $\Phi_{n}$ is a point on the sphere
$\mathbb{S}^{n}$. Assume that $\,d(.,\Phi_{n-1}):
\mathbb{S}^{n}\rightarrow [0,\pi]\,$ is the geodesic distance from $\Phi_{n}$ on $\mathbb{S}^{n}$
defined in (\ref{gdsc}). Then
\begin{eqnarray}
\label{nablagdsc}
\left|\nabla_{\mathbb{S}^{n-1}}d\right|&=&
1,\\
\label{deltagdsc}\Delta_{\mathbb{S}^{n-1}}d
&=&(n-1)
\frac{\cos{d}}{\sin{d}}.
\end{eqnarray}
\end{lem}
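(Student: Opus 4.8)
The plan is to exploit the rotational symmetry of $\mathbb{S}^{n}$ to reduce both identities to the most convenient choice of base point. Since the sphere is homogeneous and every element of the orthogonal group $O(n+1)$ acts as an isometry, there is a rotation $R$ carrying the fixed point $\Phi_{n}$ to the pole $N=(1,0,\dots,0)$, which in the coordinates $\Theta_{n}$ corresponds to $\theta_1=0$. Because the pointwise gradient norm $|\nabla_{\mathbb{S}^{n}}d|$ and the value $\Delta_{\mathbb{S}^{n}}d$ are intrinsic quantities, invariant under isometries, it suffices to prove (\ref{nablagdsc}) and (\ref{deltagdsc}) in the single case $\Phi_{n}=N$.

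First I would simplify the distance function. With $\Phi_{n}=N$ one has $x_1(\Phi_{n})=1$ and $x_m(\Phi_{n})=0$ for $2\le m\le n+1$, so the sum in (\ref{gdsc}) collapses to $\sum_{m=1}^{n+1}x_m(\Theta_{n})x_m(\Phi_{n})=x_1(\Theta_{n})=\cos\theta_1$. Hence $d(\Theta_{n},N)=\arccos(\cos\theta_1)=\theta_1$ on the open interval $(0,\pi)$; that is, the geodesic distance from the pole is precisely the first angular coordinate. Substituting this into the formula for $\nabla_{\mathbb{S}^{n}}$ from Section~2 and using $\partial_{\theta_1}d=1$ together with $\partial_{\theta_j}d=0$ for $j\ge 2$, every component vanishes except the first, so $\nabla_{\mathbb{S}^{n}}d=\widehat{\theta_1}$. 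Since $\{\widehat{\theta_j}\}$ is orthonormal this gives $|\nabla_{\mathbb{S}^{n}}d|=1$, which is (\ref{nablagdsc}).

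For the Laplacian the same observation, that $d=\theta_1$ depends on $\theta_1$ alone, annihilates every term of $\Delta_{\mathbb{S}^{n}}$ except the leading radial one. I would therefore only need to evaluate
\begin{equation*}
\Delta_{\mathbb{S}^{n}}d=\frac{1}{\sin^{n-1}\theta_1}\,\frac{\partial}{\partial\theta_1}\!\left(\sin^{n-1}\theta_1\,\frac{\partial d}{\partial\theta_1}\right)=\frac{1}{\sin^{n-1}\theta_1}\,\frac{\partial}{\partial\theta_1}\!\left(\sin^{n-1}\theta_1\right)=(n-1)\,\frac{\cos\theta_1}{\sin\theta_1},
\end{equation*}
and then rewrite $\cos\theta_1/\sin\theta_1=\cos d/\sin d$ to recover (\ref{deltagdsc}).

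The step demanding the most care is the reduction itself, rather than the two elementary computations. One must justify that precomposing $d(\cdot,N)$ with the rotation sending $\Phi_{n}$ to $N$ returns $d(\cdot,\Phi_{n})$ and transfers the gradient norm and the Laplacian without change; this rests on the identity $\langle x,\Phi_{n}\rangle=\langle Rx,R\Phi_{n}\rangle$, which makes $d$ invariant under $O(n+1)$, and on the fact that the gradient norm and $\Delta_{\mathbb{S}^{n}}$ commute with isometries. A secondary subtlety is that $d=\theta_1$ holds only where $\arccos\circ\cos$ is the identity, so (\ref{nablagdsc})--(\ref{deltagdsc}) are understood away from the two poles $d\in\{0,\pi\}$ where $\cos d/\sin d$ is singular; this is exactly the set over which the ensuing Hardy inequalities are integrated. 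Alternatively, one could bypass coordinates and invoke the classical fact that on a space of constant curvature $1$ the distance function satisfies the eikonal equation $|\nabla d|=1$ and $\Delta d=(n-1)\cot d$, but the coordinate computation above is the most self-contained.
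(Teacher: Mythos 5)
Your proof is correct. Note, however, that the paper itself does not prove this lemma: it simply states that the two identities ``are proved in \cite{ahmed2}'' and moves on, so there is no in-paper argument to compare yours against. Your route --- using the $O(n+1)$-invariance of $\langle x,\Phi_{n}\rangle$ and the transitivity of the isometry group to reduce to the pole, where $d=\theta_{1}$ and both identities become one-line computations in the radial coordinate --- is a clean, self-contained replacement for that citation, and you correctly isolate the only two points needing care: that the gradient norm and the Laplace--Beltrami operator commute with isometries, and that the identities are asserted only on $\mathbb{S}^{n}\setminus\{\pm\Phi_{n}\}$, where $d(\cdot,\Phi_{n})$ is smooth. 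One small remark: in your Laplacian computation you use the radial weight $\sin^{n-1}\theta_{1}$, which is the correct one for $\mathbb{S}^{n}\subset\mathbb{R}^{n+1}$ and is what yields the coefficient $n-1$; the operator displayed in Section~2 of the paper carries the weight $\sin^{n-2}\theta_{1}$ and is labelled $\Delta_{\mathbb{S}^{n-1}}$ (the same off-by-one slip appears in the lemma's statement, e.g.\ $d(\cdot,\Phi_{n-1})$ for $d(\cdot,\Phi_{n})$), so you have implicitly, and rightly, corrected that typographical inconsistency rather than reproduced it.
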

We will also need the following basic inequality that can be found in \cite{lindqref}:
\begin{equation}\label{lindq}
|x+y|^{p}\geq |x|^{p}+
p|x|^{p-2}\langle x, y\rangle,\quad x,y\in \mathbb{R}^{n},\;p>1.
\end{equation}
\section{A correction of the proof in \cite{Abimbola1}}
Let $\Phi_{n}\in \mathbb{S}^{n}$, $n\geq 3$, and let $2\leq p<n$.
Let $u\in C^{\infty}(\mathbb{S}^{n})$ and write
\begin{equation}\label{dfn1}
u(\Theta_{n})=
\phi^{\alpha}{(\Theta_{n})}\,{\psi{(\Theta_{n})}},
\end{equation}
where $\phi(\Theta_{n}):=\sin{d(\Theta_{n},\Phi_{n})}$,
$\alpha=-{(n-p)}/{p}$. Clearly $\psi \in C^{\infty}\left(\mathbb{S}^{n}\right)$.
Since the geodesic metric
$d(\Theta_{n},\Phi_{n})=0$ only if $\Theta_{n}=\Phi_{n}$ and $d(\Theta_{n},\Phi_{n})=\pi$ only if
$\Theta_{n},\Phi_{n}$ are antipodal, then
$1/\phi \in C^{\infty}
\left(\mathbb{S}^{n}\setminus\left\{\pm \Phi_{n}\right\}\right)$. Taking the surface gradient of both sides of (\ref{dfn1}), then employing the inequality (\ref{lindq}), it follows that
on $\mathbb{S}^{n}\setminus\left\{\pm \Phi_{n}\right\}$ we have
\begin{align}
\nonumber
\hspace*{-1 cm}\left|\nabla_{\mathbb{S}^{n}} u\right|^{p}&=
\left|\alpha \phi^{\alpha-1}\psi
\nabla_{\mathbb{S}^{n}} \phi+\phi^{\alpha}
\nabla_{\mathbb{S}^{n}} \psi\right|^{p}
\\
&\nonumber \geq |\alpha|^{p}|\phi|^{\alpha p-p}|\psi|^{p}|\nabla_{\mathbb{S}^{n}} \phi|^{p}+\\
\nonumber &\quad+p|\alpha|^{p-2}|\phi|^{(\alpha-1)(p-2)}
|\psi|^{p-2}|\nabla_{\mathbb{S}^{n}} \phi|^{p-2}
\langle \alpha \phi^{\alpha-1}\psi
\nabla_{\mathbb{S}^{n}} \phi,\phi^{\alpha}
\nabla_{\mathbb{S}^{n}} \psi \rangle\\
&\nonumber = |\alpha|^{p}\phi^{\alpha p-p}|\psi|^{p}|\nabla_{\mathbb{S}^{n}} \phi|^{p}+\\
\label{gradf} &\quad
+\alpha|\alpha|^{p-2}
\phi^{\alpha p-p+1}
\left(p|\psi|^{p-2}\psi\right)
|\nabla_{\mathbb{S}^{n}} \phi|^{p-2}
\langle
\nabla_{\mathbb{S}^{n}} \phi,
\nabla_{\mathbb{S}^{n}} \psi \rangle,
\end{align}
since $\phi>0$. So far, our proof is in accordance with that in \cite{Abimbola1}. Since $p>1$, then $|\psi|^{p}$ is differentiable and we have
$\nabla_{\mathbb{S}^{n}}|\psi|^{p}=
p|\psi|^{p-2}\psi \nabla_{\mathbb{S}^{n}}\psi$.
Also, since $1/\phi$ is smooth on
on $\mathbb{S}^{n}\setminus\left\{\pm \Phi_{n}\right\}$ and $\alpha p-p+2=-(n-2)\neq 0$, then we can write $\phi^{\alpha p-p+1}\nabla_{\mathbb{S}^{n}}\phi=
\frac{1}{\alpha p-p+2}\nabla_{\mathbb{S}^{n}}\phi^{\alpha p-p+2}$.
Using this in (\ref{gradf}) implies
\begin{align}
\nonumber
\left|\nabla_{\mathbb{S}^{n}} u\right|^{p}\geq& |\alpha|^{p}\phi^{\alpha p-p}|\psi|^{p}|\nabla_{\mathbb{S}^{n}} \phi|^{p}+\\
&\label{gradf1}+\frac{\alpha|\alpha|^{p-2}}{\alpha p-p+2}|\nabla_{\mathbb{S}^{n}} \phi|^{p-2}
\langle\nabla_{\mathbb{S}^{n}} \phi^{\alpha p-p+2},
\nabla_{\mathbb{S}^{n}} |\psi|^{p} \rangle.
\end{align}
At this point, the factor $|\nabla_{\mathbb{S}^{n}} \phi|^{p-2}$ went unjustifiably missing in \cite{Abimbola1}. Moreover, the vector
$\nabla_{\mathbb{S}^{n}} |\psi|^{p}$ is confused with $\nabla_{\mathbb{S}^{n}} \psi^{p}$.
The next main step in \cite{Abimbola1}
is to write
\begin{equation*}
\langle
\nabla_{\mathbb{S}^{n}} \phi^{\alpha p-p+2},
\nabla_{\mathbb{S}^{n}} |\psi|^{p} \rangle=
\text{div}\left(|\psi|^{p}\nabla_{\mathbb{S}^{n}} \phi^{\alpha p-p+2}\right)-|\psi|^{p}
\Delta_{\mathbb{S}^{n}} \phi^{\alpha p-p+2},
\end{equation*}
then use the divergence theorem that yields
$\int_{\mathbb{S}^{n}}\text{div}\left(|\psi|^{p}
\nabla_{\mathbb{S}^{n}} \phi^{\alpha p-p+2}\right)d\sigma_{n}=0$.
If the divergence theorem is to be used, it should rather be applied
to $|\nabla_{\mathbb{S}^{n}} \phi|^{p-2}$ $\text{div}\left(|\psi|^{p}\nabla_{\mathbb{S}^{n}} \phi^{\alpha p-p+2}\right)$ whose integral does not simply vanish.\\
\indent Let us proceed from (\ref{gradf1}). Substituting for
$\alpha$, $\phi$ and $\psi$, then using (\ref{nablagdsc}), and integrating
both sides over $\mathbb{S}^{n}$, we find
\begin{equation}\label{gradf11}
\int_{\mathbb{S}^{n}}\left|\nabla_{\mathbb{S}^{n}} u\right|^{p}d\sigma_{n}\geq \int_{\mathbb{S}^{n}}\left(\frac{n-p}{p}\right)^{p}
\frac{|u|^{p}}{|\tan{d}|^{p}}d\sigma_{n}+
\frac{1}{n-2}\left(\frac{n-p}{p}\right)^{p-1}
I_{n,p},
\end{equation}
where
\begin{equation*}
I_{n,p}=\int_{\mathbb{S}^{n}}|\cos{d}|^{p-2}
\left\langle
\nabla_{\mathbb{S}^{n}} \frac{1}{\sin^{n-2}{d}},
\nabla_{\mathbb{S}^{n}}\left( |u|^{p}\sin^{n-p}{d} \right)\right\rangle
d\sigma_{n}.
\end{equation*}
Now, observe that, when $p\geq2$, we can make sense of the gradient
\begin{equation*}
\nabla_{\mathbb{S}^{n}}|\cos{d}|^{p-2}\cos{d}=
-(p-1)|\cos{d}|^{p-2}\sin{d}\,
\nabla_{\mathbb{S}^{n}} d.
\end{equation*}
Therefore, using (\ref{deltagdsc}), we may simplify $I_{n,p}$ by integration by parts on the compact manifold $\mathbb{S}^{n}$ to obtain
\begin{eqnarray*}
I_{n,p}&=&(n-2)\int_{\mathbb{S}^{n}}
|u|^{p}\sin^{n-p}{d}\:
\text{div}\left(|\cos{d}|^{p-2}\cos{d}
\frac{\nabla_{\mathbb{S}^{n}}\,d}{\sin^{n-1}{d}}\right)
d\sigma_{n}\\
&=&-(n-2)(p-1)\int_{\mathbb{S}^{n}}
|u|^{p}\sin^{n-p}{d}\frac{|\cos{d}|^{p-2}}
{\sin^{n-2}{d}}
d\sigma_{n}\\
&=&-(n-2)(p-1)\int_{\mathbb{S}^{n}}
\frac{|u|^{p}}{|\tan{d}|^{p-2}}
d\sigma_{n},
\end{eqnarray*}
because, using (\ref{nablagdsc}) and (\ref{deltagdsc}), it turns out that
\begin{equation}\label{bcsnbd}
\left\langle\nabla_{\mathbb{S}^{n}}
\frac{1}{\sin^{n-1}{d}},\nabla_{\mathbb{S}^{n}} d \right\rangle
=-\frac{\Delta_{\mathbb{S}^{n}} d}{\sin^{n-1}{d}},\;
\Theta_{n}\neq\pm \Phi_{n}.
\end{equation}
Plugging the integral $I_{n,p}$ into the inequality (\ref{gradf11}) we deduce the inequality
\begin{align}
\nonumber
\int_{\mathbb{S}^{n}}\left|\nabla_{\mathbb{S}^{n}} u\right|^{p}d\sigma_{n}+&
(p-1)\left(\frac{n-p}{p}\right)^{p-1}\int_{\mathbb{S}^{n}}
\frac{|u|^{p}}{|\tan{d}|^{p-2}}
d\sigma_{n}
\geq \\ &\label{gradf2}\left(\frac{n-p}{p}\right)^{p}\int_{\mathbb{S}^{n}}
\frac{|u|^{p}}{|\tan{d}|^{p}}d\sigma_{n}.
\end{align}
\begin{rem}
The inequality (\ref{gradf2}) is obtained using a different method in \cite{ahmed2}. It is also shown in \cite{ahmed2} that all three coefficients in (\ref{gradf2}) are optimal  for all
$2\leq p<n$, using optimizing sequences
in the Sobolev space $W^{1,p}(\mathbb{S}^{n})$. By density, such functions can be approximated by smooth functions to give optimizing sequences in $C^{\infty}(\mathbb{S}^{n})$.
\end{rem}
\begin{rem}
When $p=2$, the inequality (\ref{gradf2}) reduces to the main inequality proved in (\cite{Songting}, Theorem 1.1).
\end{rem}
\section{Critical $L^{n}(\mathbb{S}^{n})$ Hardy inequality}
We show how to apply Xiao's method \cite{Xiao} to prove an optimal critical exponent Hardy inequality on $\mathbb{S}^{n}$, $n\geq 2$, considering the geodesic distance
$d(.,\Phi_{n})$ defined in (\ref{gdsc}) from an arbitrary point $\Phi_{n}\in \mathbb{S}^{n}$. We will certainly make use of the properties (\ref{nablagdsc})
and (\ref{deltagdsc}).
\begin{thm}
Fix $n\geq 2$ and assume $u \in C^{\infty}(\mathbb{S}^{n})$. Let $\Phi_{n}$
be some point on the sphere $\mathbb{S}^{n}$ and consider the geodesic distance (\ref{gdsc}) from $\Phi_{n}$. Then
\begin{align}
\nonumber
\int_{\mathbb{S}^{n}}\left|\nabla_{\mathbb{S}^{n}} u\right|^{n}d\sigma_{n}+&
(n-1)\left(\frac{n-1}{n}\right)^{n-1}\int_{\mathbb{S}^{n}}
\frac{|u|^{p}}{|\tan{d}|^{n-2}
\left(\log{\frac{e}{\sin{d}}}\right)^{n-1}}
d\sigma_{n}
\geq \\ &\label{gradf3}\left(\frac{n-1}{n}\right)^{n}
\int_{\mathbb{S}^{n}}
\frac{|u|^{n}}{|\tan{d}|^{n}
\left(\log{\frac{e}{\sin{d}}}\right)^{n}}d\sigma_{n}.
\end{align}
\end{thm}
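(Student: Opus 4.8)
The plan is to transplant the substitution of Section~3 into the critical regime by trading the now-vanishing power $\alpha=-(n-p)/p$ of $\sin d$ for a power of the logarithmic weight. Set $w:=\log(e/\sin d)$ and write $u=w^{\beta}\psi$ with the exponent $\beta=(n-1)/n$. Since $0<\sin d\le 1$ on $\mathbb{S}^{n}\setminus\{\pm\Phi_{n}\}$ we have $w\ge 1$, and $w$ is smooth there, so $\psi=w^{-\beta}u$ is smooth on $\mathbb{S}^{n}\setminus\{\pm\Phi_{n}\}$. Differentiating $w$ and invoking (\ref{nablagdsc}) gives $\nabla_{\mathbb{S}^{n}}w=-\cot d\,\nabla_{\mathbb{S}^{n}}d$ and hence $|\nabla_{\mathbb{S}^{n}}w|=|\cot d|=1/|\tan d|$, which is the source of all the $|\tan d|$ weights below.

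Taking the surface gradient of $u=w^{\beta}\psi$ and applying the elementary inequality (\ref{lindq}) with $p=n$, $x=\beta w^{\beta-1}\psi\,\nabla_{\mathbb{S}^{n}}w$ and $y=w^{\beta}\nabla_{\mathbb{S}^{n}}\psi$, I expand $|\nabla_{\mathbb{S}^{n}}u|^{n}$ into a principal term $|\beta|^{n}w^{(\beta-1)n}|\psi|^{n}|\nabla_{\mathbb{S}^{n}}w|^{n}$ plus a cross term. The decisive arithmetic is that the choice $\beta=(n-1)/n$ makes $(\beta-1)n=-1$, while the power of $w$ multiplying the cross term, namely $n\beta-(n-1)$, collapses to $0$. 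Using $|\psi|^{n}=|u|^{n}w^{-(n-1)}$, the principal term is then exactly $((n-1)/n)^{n}|u|^{n}/(|\tan d|^{n}w^{n})$, the integrand on the right of (\ref{gradf3}); and, after writing $n|\psi|^{n-2}\psi\,\nabla_{\mathbb{S}^{n}}\psi=\nabla_{\mathbb{S}^{n}}|\psi|^{n}$, the cross term reduces to the single expression $((n-1)/n)^{n-1}|\nabla_{\mathbb{S}^{n}}w|^{n-2}\langle\nabla_{\mathbb{S}^{n}}w,\nabla_{\mathbb{S}^{n}}|\psi|^{n}\rangle$.

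Integrating over $\mathbb{S}^{n}$, the next step handles the cross term by the divergence theorem exactly as the corrected argument of Section~3 does: writing the integrand as $\text{div}(|\psi|^{n}|\nabla_{\mathbb{S}^{n}}w|^{n-2}\nabla_{\mathbb{S}^{n}}w)-|\psi|^{n}\,\text{div}(|\nabla_{\mathbb{S}^{n}}w|^{n-2}\nabla_{\mathbb{S}^{n}}w)$ and discarding the full divergence. The remaining task is to evaluate the $n$-Laplacian-type field $\text{div}(|\nabla_{\mathbb{S}^{n}}w|^{n-2}\nabla_{\mathbb{S}^{n}}w)$. Presenting it as $g(d)\,\nabla_{\mathbb{S}^{n}}d$ with $g(d)=-|\cot d|^{n-2}\cot d$ and using $\text{div}(g(d)\nabla_{\mathbb{S}^{n}}d)=g'(d)+g(d)\,\Delta_{\mathbb{S}^{n}}d$ together with (\ref{deltagdsc}), the $\csc^{2}d$ produced by $g'$ and the $\cot^{2}d$ produced by $g\,\Delta_{\mathbb{S}^{n}}d$ combine through $\csc^{2}d-\cot^{2}d=1$ into the clean value $(n-1)|\cot d|^{n-2}=(n-1)/|\tan d|^{n-2}$. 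Substituting $|\psi|^{n}=|u|^{n}w^{-(n-1)}$ turns the integrated cross term into $-(n-1)((n-1)/n)^{n-1}\int_{\mathbb{S}^{n}}|u|^{n}/(|\tan d|^{n-2}w^{n-1})\,d\sigma_{n}$, and transposing it to the left reproduces (\ref{gradf3}).

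I expect the main difficulty to be the rigorous justification of the integration by parts near the singular points $\pm\Phi_{n}$, where $w^{-1}$, $|\cot d|^{n-2}$ and all the weights degenerate. As in Section~3 I would excise geodesic balls $B_{\varepsilon}(\pm\Phi_{n})$, apply the divergence theorem on the complement, and let $\varepsilon\to 0$. On $\partial B_{\varepsilon}$ the field has size $|\cot\varepsilon|^{n-1}$ while the surface measure scales like $\sin^{n-1}\varepsilon$; these cancel up to the bounded factor $\cos^{n-1}\varepsilon$, so the boundary contribution is controlled solely by the residual decay $|\psi|^{n}\sim w^{-(n-1)}=(\log(e/\sin\varepsilon))^{-(n-1)}\to 0$, which is precisely what forces the boundary terms to vanish. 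A secondary, more routine point, already noted for the subcritical inequality, is that $|\cot d|^{n-2}\cot d$ is differentiable across $d=\pi/2$ for every integer $n\ge 2$, which legitimises the divergence computation there. Combining the pointwise bound with the vanishing boundary terms yields the stated inequality for all $u\in C^{\infty}(\mathbb{S}^{n})$.
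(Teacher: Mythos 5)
Your proposal is correct and follows essentially the same route as the paper: the identical substitution $u=\left(\log\frac{e}{\sin d}\right)^{(n-1)/n}\psi$, the same pointwise expansion via the Lindqvist inequality (the paper's (\ref{gradf1}) with $p=n$), and the same divergence-theorem treatment of the cross term, arriving at the same value $(n-1)/|\tan d|^{n-2}$ for $\text{div}\left(|\nabla_{\mathbb{S}^{n}}w|^{n-2}\nabla_{\mathbb{S}^{n}}w\right)$. The only cosmetic differences are that you evaluate that divergence directly through $g'(d)+g(d)\Delta_{\mathbb{S}^{n}}d$ and $\csc^{2}d-\cot^{2}d=1$ where the paper routes it through its identity (\ref{bcsnbd}), and that you make explicit the excision argument near $\pm\Phi_{n}$ that the paper leaves implicit.
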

\begin{proof}
Suppose $u\in C^{\infty}(\mathbb{S}^{n})$.
Analogously to (\ref{dfn1}), we can write
\begin{equation*}
u(\Theta_{n})=
\phi^{\alpha}{(\Theta_{n})}\,{\psi{(\Theta_{n})}},
\end{equation*}
where $\psi \in C^{\infty}(\mathbb{S}^{n})$, but we modify the definition of $\alpha$ and that of $\phi$ as follows:
\begin{equation*}
\phi(\Theta_{n}):=
\log{\frac{e}{\sin{d(\Theta_{n},\Phi_{n})}}},\quad
\alpha=\frac{n-1}{n}.
\end{equation*}
We note here that $\phi\in C^{1}\left(\mathbb{S}^{n}\setminus\left\{\pm \Phi_{n}\right\}\right)$ and we have
\begin{equation}\label{grdlogsn}
\nabla_{\mathbb{S}^{n}}
\phi(\Theta_{n})=-\frac{1}{\tan{d(\Theta_{n},\Phi_{n})}}
\nabla_{\mathbb{S}^{n}}{d(\Theta_{n},\Phi_{n})},\quad
\Theta_{n}\neq\pm \Phi_{n}.
\end{equation}
Consequently, in the light of (\ref{nablagdsc}), we see that
\begin{equation}\label{grdlogsn12}
\left|\nabla_{\mathbb{S}^{n}}
\phi(\Theta_{n})\right|=\frac{1}{
\left|\tan{d(\Theta_{n},\Phi_{n})}\right|}.
\end{equation}
We pick up the proof at (\ref{gradf1}) with $p=n$.
Substituting for $\alpha$, $\phi$ and $\psi$, while using (\ref{grdlogsn}) and (\ref{grdlogsn12}), we get
\begin{equation}\label{pc1}
|\alpha|^{n}\phi^{\alpha n-n}|\psi|^{n}|\nabla_{\mathbb{S}^{n}} \phi|^{n}
=\left(\frac{n-1}{n}\right)^{n}
\frac{|u|^{n}}{\left|\tan{d}\right|^{n}
\left(\log{\frac{e}{\sin{d}}}\right)^{n}},\quad
\Theta_{n}\neq \pm \Phi_{n},
\end{equation}
\begin{equation}\label{pc2}
\begin{split}
&\frac{\alpha|\alpha|^{n-2}}{\alpha n-n+2}|\nabla_{\mathbb{S}^{n}} \phi|^{n-2}
\langle\nabla_{\mathbb{S}^{n}} \phi^{\alpha n-n+2},
\nabla_{\mathbb{S}^{n}} |\psi|^{n} \rangle\\
&=
\left(\frac{n-1}{n}\right)^{n-1}
\frac{1}{\left|\tan{d}\right|^{n-2}}
\left\langle\nabla_{\mathbb{S}^{n}} \left(\log{\frac{e}{\sin{d}}}\right),
\nabla_{\mathbb{S}^{n}} |\psi|^{n} \right\rangle\\
&=-
\left(\frac{n-1}{n}\right)^{n-1}
\frac{1}{\left|\tan{d}\right|^{n-2}\tan{d}}
\left\langle\nabla_{\mathbb{S}^{n}} d,
\nabla_{\mathbb{S}^{n}} |\psi|^{n} \right\rangle,\quad
\Theta_{n}\neq \pm \Phi_{n}.
\end{split}
\end{equation}
Taking into account the calculations (\ref{pc1}) and (\ref{pc2}), we integrate both sides of (\ref{gradf1}),  with $p=n$, over $\mathbb{S}^{n}$. It follows that
\begin{equation}\label{gradf301}
\begin{split}
&\int_{\mathbb{S}^{n}}\left|\nabla_{\mathbb{S}^{n}} u\right|^{n}d\sigma_{n}\geq
\left(\frac{n-1}{n}\right)^{n}\int_{\mathbb{S}^{n}}
\frac{|u|^{n}}{|\tan{d}|^{n}
\left(\log{\frac{e}{\sin{d}}}\right)^{n}}d\sigma_{n}-
\left(\frac{n-1}{n}\right)^{n-1}J_{n,p},
\end{split}
\end{equation}
where
\begin{equation*}
J_{n,p}=\int_{\mathbb{S}^{n}}
\frac{\left|\cos{d}\right|^{n-2}\cos{d}}{
\sin^{n-1}{d}}
\left\langle\nabla_{\mathbb{S}^{n}} d,
\nabla_{\mathbb{S}^{n}} |\psi|^{n} \right\rangle
d\sigma_{n}
\end{equation*}
Invoking the divergence theorem, we see that
\begin{equation*}
J_{n,p}=-\int_{\mathbb{S}^{n}}
|\psi|^{n}\text{div}
\left(\frac{\left|\cos{d}\right|^{n-2}\cos{d}}{
\sin^{n-1}{d}}
\nabla_{\mathbb{S}^{n}} d  \right)
d\sigma_{n}
\end{equation*}
Using the identity (\ref{bcsnbd}), we immediately realize
\begin{eqnarray}
\nonumber J_{n,p}&=&-\int_{\mathbb{S}^{n}}
\frac{|u|^{n}}{\left(\log{\frac{e}{
\sin{d(\Theta_{n},\Phi_{n})}}}\right)^{n-1}}
\left\langle \frac{\nabla_{\mathbb{S}^{n}} d}{\sin^{n-1}{d}},
\nabla_{\mathbb{S}^{n}}
{\left|\cos{d}\right|^{n-2}\cos{d}} \right\rangle
d\sigma_{n}\\
&=&\label{fnl}
(n-1)\int_{\mathbb{S}^{n}}
\frac{|u|^{n}}{\left(\log{\frac{e}{
\sin{d(\Theta_{n},\Phi_{n})}}}\right)^{n-1}}
\frac{1}{\left|\tan{d}\right|^{n-2}}
d\sigma_{n}.
\end{eqnarray}
The inequality (\ref{gradf3}) finally
follows from (\ref{gradf301}) and (\ref{fnl}).
\end{proof}
\begin{rem}
The inequality (\ref{gradf3}) is derived in \cite{ahmed2} using a different method.
It is noteworthy that all constants of (\ref{gradf3})
are optimal. This is also proved in \cite{ahmed2}
utilizing optimizing sequences in the Sobolev space
$W^{1,n}(\mathbb{S}^{n})$. The constants are therefore optimal for smooth functions. For, arguing by contradiction, if a constant in (\ref{gradf3}) could be improved for smooth functions, then the improved inequality would also be valid for
$W^{1,n}(\mathbb{S}^{n})$ functions by density.
\end{rem}

\end{document}